\begin{document}
	
	\title{Stochastic B-series and order conditions for exponential integrators}
	\titlerunning{Stochastic B-series and order conditions for exponential integrators}
	
	\author{ Alemayehu Adugna Arara\inst{1} \and Kristian Debrabant\inst{2} \and Anne Kv{\ae}rn{\o}\inst{3}   }
	
	\authorrunning{A.\ A.\ Arara \and K.\ Debrabant \and A.\ Kv{\ae}rn{\o}}
	
	\institute{
		Hawassa University, School of Mathematics and Statistics, P.O. Box 05, Hawassa, Ethiopia, {\tt alemayehu\_arara@yahoo.com}
		\and University of Southern Denmark, Department of Mathematics and Computer Science, 5230 Odense M, Denmark, {\tt debrabant@imada.sdu.dk}
		\and Norwegian University of Science and Technology - NTNU, Department of
		Mathematical Sciences, NO-7491 Trondheim, Norway, {\tt anne.kvarno@ntnu.no}
	}
	
	\maketitle
	
	\begin{abstract}
We discuss stochastic differential equations with a stiff linear part and their approximation by stochastic exponential integrators. Representing the exact and approximate solutions using B-series and rooted trees, we derive the order conditions for stochastic exponential integrators. The resulting general order theory covers both It\^{o} and Stratonovich integration.
	\end{abstract}
	
	\section{Introduction}
	\label{kvarno_mini_ms18:sec:intro}
	The idea of expressing the exact and numerical solutions of different blends of
	differential equations in terms of B-series and rooted trees has been an
	indispensable tool ever since John Butcher introduced the idea in
	1963  \cite{kvarno_mini_ms18:butcher63cft}. Naturally then, such series have also been derived for
	stochastic differential equations (SDEs) by several authors, see e.g.\ \cite{kvarno_mini_ms18:debrabant08bsa} for an overview.
	
	In this paper, the focus is on $d$-dimensional SDEs of the form
	\begin{equation}\label{kvarno_mini_ms18:eq:SDE}
	\mathrm{d}X(t)=\bigg(AX(t) +
	g_0\big(X(t)\big)\bigg)\mathrm{d} t+\sum_{{m}=1}^{{M}}g_{m}(X(t))\star\mathrm{d} W_m(t),\quad X(0)=x_{0},
	\end{equation}
	or in integral form
	\begin{equation}\label{kvarno_mini_ms18:eq:SDEexpform}
	  X(t) = \mathrm{e}^{tA}x_0 +  \int_{0}^t \mathrm{e}^{(t-s)A}g_0(X(s)) \mathrm{d} s +  \sum_{{m}=1}^{{M}} \int_{0}^t
	\mathrm{e}^{(t-s)A}g_m(X(s))\star\mathrm{d} W_m(s),
	\end{equation}
	in which case the linear term $AX(t)$, $A\in \mathbb{R}^{d\times d}$ constant will be treated with particular care by the use of exponential
	integrators, see e.g.
	\cite{kvarno_mini_ms18:becker16aew,kvarno_mini_ms18:cohen13atm,kvarno_mini_ms18:tambue16wcf} and
	references
	therein. The integrals w.\,r.\,t.\ the components of the ${M}$-dimensional Wiener process $W(t)$ can be
	interpreted e.\,g.\ as an It\^{o} or a Stratonovich integral. The coefficients $g_m\,:\,\mathbb{R}^d
	\rightarrow \mathbb{R}^d$ are sufficiently differentiable and satisfy a Lipschitz and a linear growth
	condition. For Stratonovich SDEs, we require in addition that $g_m$ are differentiable and that
	also the $g_m' g_m $ satisfy a Lipschitz and a linear growth condition.
	In the following, we will denote $\mathrm{d} t = \mathrm{d} W_0(t)$.

	For the numerical solution of \eqref{kvarno_mini_ms18:eq:SDE} we consider a general class of $\nu$-stage stochastic exponential integrators:
	\begin{subequations} \label{kvarno_mini_ms18:eq:Btab}
		\begin{align}
		H_i &= \mathrm{e}^{c_ihA}Y_n + \sum_{{m}=0}^{{M}} \sum_{j=1}^\nu
		Z^{(m)}_{ij}(A)\cdot g_{{m}}(H_j), \qquad i = 1, \ldots, \nu,\\
		Y_{n+1} &= \mathrm{e}^{hA} Y_n +  \sum_{{m}=0}^{{M}} \sum_{i=1}^\nu
		z_i^{({m})}(A) \cdot g_{{m}}(H_i),
		\end{align}
	\end{subequations}
	where typically the coefficients $Z_{ij}^{({m})}$ and $z_{i}^{({m})}$ are random variables depending on the stepsize $h$, the matrix $A$ and the Wiener processes.
	
	\begin{example}
		\label{kvarno_mini_ms18:ex:SETDRK}
		A 2-stage stochastic exponential time-differencing Runge--Kutta method (SETDRK) for ${M}=1$ is given by:
		\begin{align*}
		H_1 & = Y_n, \\
		H_2 & = Y_n + \sqrt{h}g_1(H_1),\\
		Y_{n+1} & = \mathrm{e}^{hA}Y_n +\int_{t_n}^{t_{n+1}}\mathrm{e}^{(t_{n+1}-s)A} \mathrm{d} s\cdot g_0(H_1)\\& + \int_{t_n}^{t_{n+1}}\mathrm{e}^{(t_{n+1}-s)A}\star\mathrm{d} W_1(s)\cdot g_1(H_1)\\
		& +
		\frac{1}{\sqrt{h}}\int_{t_n}^{t_{n+1}}\mathrm{e}^{(t_{n+1}-s)A}\; W_1(s)\star\mathrm{d} W_1(s)\cdot\left(-g_1(H_1)+g_1(H_2)\right),
		\end{align*}
		where $t_{n+1} = t_n+h$.
	\end{example}
	
	In the following, the ideas introduced in \cite{kvarno_mini_ms18:debrabant08bsa} will be used to derive a B-series representation of \eqref{kvarno_mini_ms18:eq:SDEexpform} and corresponding order conditions for the method \eqref{kvarno_mini_ms18:eq:Btab}.
	In the deterministic case, such analysis  has been carried out in \cite{kvarno_mini_ms18:berland05bsa,kvarno_mini_ms18:hochbruck05eer}.

	\section{B-series and order conditions for exponential integrators}
	\label{kvarno_mini_ms18:sec:bseries}
	To develop B-series for the exact solution of \eqref{kvarno_mini_ms18:eq:SDE} and one step numerical exponential integrators of the form \eqref{kvarno_mini_ms18:eq:Btab}, we use the following definitions of the trees associated to the stochastic differential equation \eqref{kvarno_mini_ms18:eq:SDE} and their corresponding elementary differentials.
	\begin{definition}[trees]
		The set of ${M} +2$-colored, rooted trees
		\[ T = \{\emptyset\}\cup T_0 \cup T_1 \cup \cdots \cup T_{M} \cup T_A \]
		is recursively defined as follows:
		\begin{enumerate}
			\item The graph $\bullet_{m} = [\emptyset]_{m}$ with only one vertex of color ${m}$ belongs to $T_{m}$, and $\bullet_A = [\emptyset]_A$ with only one vertex of color $A$ belongs to $T_A$,
			\item[2.] Let $\tau = [\tau_1, \tau_2, \ldots, \tau_\kappa]_{m}$ be the tree formed by joining the subtrees $\tau_1, \tau_2, \ldots, \tau_\kappa$ each by a single branch to a common root of color ${m}$ and $\tau = [\tau_1]_A$ be the tree formed by joining the subtree $\tau_1$ to a root of color $A$. If $\tau_1, \tau_2, \ldots, \tau_\kappa\in T$, then $\tau = [\tau_1, \tau_2, \ldots, \tau_\kappa]_{m}\in T_{m}$ and $[\tau_1]_A\in T_A$,
		\end{enumerate}
		for $m=0,\dots,M$.
	\end{definition}
	\begin{definition}[elementary differential]
		For a tree $\tau\in T$ the elementary differential is a mapping $F(\tau):\mathbb{R}^d\to \mathbb{R}^d$ defined recursively by
		\begin{enumerate}
			\item $F(\emptyset)(x_0) = x_0,$
			\item $F(\bullet_{m})(x_0) = g_{m}(x_0)$, $F(\bullet_A)(x_0)= Ax_0$,
			\item If $\tau_1, \tau_2, \ldots, \tau_\kappa\in T$, then $F([\tau_1]_A)(x_0)= AF(\tau_1)(x_0)$ and
			\[
			F([\tau_1, \tau_2, \ldots, \tau_\kappa]_{m})(x_0) = g_{m}^{(\kappa)}(x_0)(F(\tau_1)(x_0), \ldots, F(\tau_\kappa)(x_0))
			\]
		\end{enumerate}
		for $m=0,\dots,M$.
	\end{definition}
	Now we give the definition of B-series.
	\begin{definition}[B-series] A (stochastic) B-series is a formal series of the form
		\[
		B(\phi, x_0; h) =\sum_{\tau\in T}\alpha(\tau)\cdot\phi(\tau)(h)\cdot F(\tau)(x_0),
		\]
		where $\phi(\tau)(h)$ is a random variable satisfying $\phi(\emptyset)\equiv1$  and $\phi(\tau)(0)=0$ for all $\tau\in T\setminus\{\emptyset\}$, and
		$\alpha : T\to\mathbb{Q}$ is given by
		\begin{align*}
		& \alpha(\emptyset) = 1, \qquad \alpha(\bullet_{m}) = 1, \qquad \alpha(\bullet_A) = 1, \\
		&\alpha([\tau_1, \ldots, \tau_\kappa]_{m}) = \frac{1}{r_1!r_2!\cdots r_{l}!}\prod_{{k}=1}^{\kappa}\alpha(\tau_{k}), \quad \alpha([\tau_1]_A) = \alpha(\tau_1),
		\end{align*}
		where $r_1,r_2,\ldots, r_{l}$ count equal trees among $\tau_1, \tau_2, \ldots, \tau_\kappa$, and $m=0,\dots,M$.
	\end{definition}
	
	Next we give an important lemma to derive B-series for the exact and numerical solutions. It
	states that if $Y(h)$ can be expressed as a B-series, then $f(Y(h))$ can also be expressed
	as a B-series where the sum is taken over trees with a root of color $f$ and subtrees in $T$.
	
	\begin{lemma}\label{kvarno_mini_ms18:lemma:fBseries}
		If $Y(h)= B(\phi,x_0;h)$ is some B-series and $f\in C^\infty(\mathbb{R}^d, \mathbb{R}^d)$,then $f(Y(h))$ can be written as a formal series of the form
		\begin{equation}\label{kvarno_mini_ms18:eq:fBseries}
		f(Y(h)) = \sum_{u\in U_f}\beta(u)\cdot\psi_\phi(u)(h)\cdot G(u)(x_0)
		\end{equation}
		where $U_f$ is a set of trees derived from $T$, by
		\begin{enumerate}
			\item[(i)] $[\emptyset]_f\in U$, and if $\tau_1, \tau_2, \ldots, \tau_\kappa\in T$, then $u=[\tau_1, \tau_2, \ldots, \tau_\kappa]_f\in U_f.$
			\item[(ii)] $G([\emptyset]_f)(x_0) = f(x_0)$ and
			\[
			G([\tau_1, \tau_2, \ldots, \tau_\kappa]_f)(x_0) = f^{(\kappa)}(x_0)(F(\tau_1)(x_0),\ldots, F(\tau_\kappa)(x_0)).
			\]
			\item[(iii)] $\beta([\emptyset]_f) = 1$ and $\beta([\tau_1, \ldots, \tau_\kappa]_f) = \frac{1}{r_1!r_2!\cdots r_{l}!}\prod_{{k}=1}^{\kappa}\alpha(\tau_{k})$, with $r_1,$ $r_2,$ $\ldots,$ $r_{l}$ counting equal trees among $\tau_1,\tau_2,\ldots,\tau_\kappa$.
			\item[(iv)] $\psi_\phi([\emptyset]_f)\equiv1$ and $\psi_\phi([\tau_1, \tau_2, \ldots, \tau_\kappa]_f)(h) = \prod_{{k}=1}^{\kappa}\phi(\tau_{k})(h)$.
		\end{enumerate}
	\end{lemma}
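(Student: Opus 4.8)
The plan is to Taylor expand $f$ about the point $B(\phi,x_0;0)=x_0$ and then substitute the given B-series, collecting terms by the shape of the resulting monomials in the elementary differentials. First I would record, using $\alpha(\emptyset)=1$, $\phi(\emptyset)\equiv1$ and $F(\emptyset)(x_0)=x_0$, that
\[
  Y(h)-x_0=\sum_{\tau\in T\setminus\{\emptyset\}}\alpha(\tau)\cdot\phi(\tau)(h)\cdot F(\tau)(x_0),
\]
a formal quantity whose coefficients all vanish at $h=0$. Writing the Taylor series of $f$ at $x_0$ as
\[
  f(Y(h))=\sum_{\kappa\ge0}\frac1{\kappa!}\,f^{(\kappa)}(x_0)\big(Y(h)-x_0,\dots,Y(h)-x_0\big)
\]
(with $\kappa$ arguments) and using the $\kappa$-linearity of $f^{(\kappa)}(x_0)$, I would expand each $\kappa$-th power into a sum over ordered $\kappa$-tuples $(\tau_1,\dots,\tau_\kappa)\in(T\setminus\{\emptyset\})^{\kappa}$, obtaining
\[
  f(Y(h))=f(x_0)+\sum_{\kappa\ge1}\frac1{\kappa!}\sum_{(\tau_1,\dots,\tau_\kappa)}\Big(\prod_{k=1}^{\kappa}\alpha(\tau_k)\phi(\tau_k)(h)\Big)\,f^{(\kappa)}(x_0)\big(F(\tau_1)(x_0),\dots,F(\tau_\kappa)(x_0)\big).
\]

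Next I would read off the tree structure. For $u=[\tau_1,\dots,\tau_\kappa]_f$, items (ii) and (iv) of the statement identify $f^{(\kappa)}(x_0)\big(F(\tau_1)(x_0),\dots,F(\tau_\kappa)(x_0)\big)=G(u)(x_0)$ and $\prod_k\phi(\tau_k)(h)=\psi_\phi(u)(h)$, while the $\kappa=0$ term $f(x_0)$ is exactly the contribution of the leaf $u=[\emptyset]_f$ with $\beta([\emptyset]_f)=\psi_\phi([\emptyset]_f)=1$ and $G([\emptyset]_f)(x_0)=f(x_0)$. Since both $G(u)$ and $\psi_\phi(u)$ depend only on the multiset $\{\tau_1,\dots,\tau_\kappa\}$ and not on the ordering, the only remaining step is the combinatorial passage from ordered tuples to multisets: a multiset whose distinct trees occur with multiplicities $r_1,\dots,r_l$ is represented by exactly $\kappa!/(r_1!\cdots r_l!)$ ordered tuples, so that $\frac1{\kappa!}\sum_{\text{ordered}}=\sum_{\text{multisets}}\frac1{r_1!\cdots r_l!}$. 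This turns the constant factor $\prod_k\alpha(\tau_k)$ into $\beta(u)=\frac1{r_1!\cdots r_l!}\prod_k\alpha(\tau_k)$, reproducing (iii), and the index set that appears is precisely $U_f$ as generated by (i). (One also notes in passing that $\psi_\phi(u)(0)=0$ for $u\neq[\emptyset]_f$, since at least one factor $\phi(\tau_k)(0)$ vanishes, so the series is of genuine B-series type.)

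Everything here is purely formal, so no analytic convergence question arises. The one point that needs care is the bookkeeping of the rearrangement of the formal multiple series — verifying that each $u\in U_f$ collects a single, well-defined (finite rational) coefficient and that the set of indices actually produced coincides with $U_f$, in particular the correct treatment of the empty subtree (the leaf $[\emptyset]_f$ handled separately versus the trees $[\tau_1,\dots,\tau_\kappa]_f$ with all $\tau_k\neq\emptyset$). The rest is the routine substitution and multilinear expansion sketched above, which is the stochastic B-series analogue of the classical computation underlying Butcher's composition formula.
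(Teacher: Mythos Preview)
Your proposal is correct and is precisely the standard Taylor-expansion-plus-multinomial-regrouping argument that establishes this lemma; the paper itself does not give a proof but simply cites \cite{kvarno_mini_ms18:debrabant08bsa}, where exactly this computation is carried out. Your treatment of the $\kappa=0$ term, the passage from ordered tuples to multisets via $\kappa!/(r_1!\cdots r_l!)$, and the identification of $G$, $\psi_\phi$ and $\beta$ are all in order.
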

	\begin{proof}
		The proof of this lemma is given in \cite{kvarno_mini_ms18:debrabant08bsa}.\qed
	\end{proof}
	Applying Lemma \ref{kvarno_mini_ms18:lemma:fBseries} to the functions $g_{m}$ on the right hand side of \eqref{kvarno_mini_ms18:eq:SDE} gives
	\begin{equation}\label{kvarno_mini_ms18:eq:gmBseries}
	g_{m}(B(\phi, x_0;h)) = \sum_{\tau\in T_{m}}\alpha(\tau)\cdot\phi_{m}'(\tau)(h)\cdot F(\tau)(x_0),
	\end{equation}
	where
	\[
	\phi_{m}'(\tau)(h) = \begin{cases}
	1 & \text{ if } \tau = \bullet_{m},\\ \prod_{{k}=1}^{\kappa}\phi(\tau_{k})(h) & \text{ if } \tau = [\tau_1,\ldots,\tau_\kappa]_{m}\in T_{m}.
	\end{cases}
	\]
	Assume the exact solution $X(h)$ of \eqref{kvarno_mini_ms18:eq:SDE} at $t = h$ can be written as a B-series $B(\varphi, x_0; h)$.
	Substituting $X(h)=B(\varphi, x_0; h)$ in \eqref{kvarno_mini_ms18:eq:SDEexpform} and using \eqref{kvarno_mini_ms18:eq:gmBseries} gives
	\begin{multline*}
	B(\varphi, x_0; h)  \\ = \mathrm{e}^{hA}x_0
	+\sum_{{m}=0}^{{M}}\int_{0}^{h}\mathrm{e}^{(h-s)A}\sum_{\hat{\tau}\in
      T_{m}}\alpha(\hat{\tau})\cdot\varphi_{m}'(\hat{\tau})(s)\cdot F(\hat{\tau})(x_0)\star\mathrm{d} W_{m}(s).
	\end{multline*}
	Inserting the series representation $\mathrm{e}^{hA}x_0 = \sum_{{q}=0}^{\infty}\frac{h^{q} A^{q}}{{q}!}x_0$ yields
	\begin{multline}\label{kvarno_mini_ms18:eq:Bseriesimplicit}
	B(\varphi, x_0; h)
	= x_0 + \sum_{{q}=1}^{\infty}\frac{h^{q}}{{q}!}A^{{q}}x_0 \\ +
	\sum_{{m} = 0}^{{M}}\sum_{\hat{\tau}\in T_{m}}\alpha(\hat{\tau})\sum_{{q}=0}^{\infty}
	\bigg(\int_{0}^{h}\frac{(h-s)^{q}}{{q}!}\varphi_{m}'(\hat{\tau})(s)\star\mathrm{d} W_{m}(s)\cdot A^{q} F(\hat{\tau})(x_0) \bigg).
	\end{multline}
	Note that any tree $\tau\in T$ can be rewritten as $\tau =
	[\ldots[[\hat{\tau}\overbrace{]_A]_A\ldots]_{A}}^{{q}\text{-times}}=[\hat{\tau}]_A^{q}$
	for ${q} = 0, 1, \ldots$, with  $\hat{\tau}\in T\setminus T_A$, that means $\hat{\tau}=\emptyset$ or
	$\hat{\tau}=[\tau_1,\ldots,\tau_\kappa]_{m}$ for an ${m}\in\{1,\dots,{M}\}$. It holds that
	$F([\hat{\tau}]_A^{q})=A^{q} F(\hat{\tau})$,
	$\alpha([\hat{\tau}]_A^{q})=\alpha(\hat{\tau})$. Especially, for $\tau =
	[\emptyset]_A^{q}$ it holds that $\alpha(\tau) =1$ and $F(\tau)(x_0) = A^{q}
	F(\emptyset)=A^{q} x_0$. In conclusion \eqref{kvarno_mini_ms18:eq:Bseriesimplicit} implies the following theorem:
	\begin{theorem}\label{kvarno_mini_ms18:thm:exact}
		The solution $X(h)$ of the SDE \eqref{kvarno_mini_ms18:eq:SDE} can be written as a B-series $B(\varphi, x_0; h)$ with
		\begin{gather*}
		\varphi(\emptyset)(h) = 1,\qquad \varphi([\emptyset]_A^{q})(h) = \frac{h^{q}}{{q}!},
\\
		\varphi([[\tau_1,\ldots,\tau_\kappa]_{m}]_A^{q})(h) =
		\int_{0}^{h}\frac{(h-s)^{q}}{{q}!}\prod_{{k}=1}^{\kappa}\varphi(\tau_{k})(s)\star\mathrm{d} W_{m}(s),
		\end{gather*}
		for $\tau_1,\dots,\tau_\kappa\in T$, $\kappa=0,1,\ldots$, ${q}=0,1,\ldots$ and $m=0,\dots,M$, where $\tau_i\neq\emptyset$ for $i=1,\dots,\kappa$ if $\kappa>1$.
	\end{theorem}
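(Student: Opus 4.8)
The plan is to extract the coefficient functions $\varphi$ directly from the implicit identity \eqref{kvarno_mini_ms18:eq:Bseriesimplicit}, which any B-series representation of the exact solution is forced to satisfy. The structural fact that makes this work, recorded just before the statement, is that every tree $\tau\in T$ has a \emph{unique} decomposition $\tau=[\hat\tau]_A^{q}$ with $q\ge 0$ and $\hat\tau\in T\setminus T_A$: one strips off the maximal chain of $A$-colored vertices at the root, so $\hat\tau$ is $\emptyset$ or of the form $[\tau_1,\dots,\tau_\kappa]_{m}$ with $m\in\{0,\dots,M\}$ and $\tau_1,\dots,\tau_\kappa\in T$. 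Through $\hat\tau\mapsto[\hat\tau]_A^{q}$ the three blocks on the right of \eqref{kvarno_mini_ms18:eq:Bseriesimplicit} — the term $x_0$, the chain $\sum_{q\ge1}\frac{h^{q}}{q!}A^{q}x_0$, and the double sum over $m$ and $\hat\tau\in T_{m}$ — thus enumerate three disjoint families of trees whose union is all of $T$, no tree appearing twice.

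First I would match, for each such $\tau$, the coefficient of the elementary differential $F(\tau)(x_0)$ on the two sides of \eqref{kvarno_mini_ms18:eq:Bseriesimplicit}, using $F([\hat\tau]_A^{q})=A^{q}F(\hat\tau)$ and $\alpha([\hat\tau]_A^{q})=\alpha(\hat\tau)$. The case $\tau=\emptyset$ gives $\varphi(\emptyset)(h)=1$; the case $\tau=[\emptyset]_A^{q}$ gives $\alpha(\tau)=1$ and $\varphi(\tau)(h)=h^{q}/q!$; and for $\tau=[[\tau_1,\dots,\tau_\kappa]_{m}]_A^{q}$, dividing by $\alpha(\tau)=\alpha([\tau_1,\dots,\tau_\kappa]_{m})$ and inserting $\varphi_{m}'([\tau_1,\dots,\tau_\kappa]_{m})(s)=\prod_{k=1}^{\kappa}\varphi(\tau_k)(s)$ yields the stated integral formula, uniformly in $m=0,\dots,M$ (the drift being $m=0$ with $\mathrm{d} W_0=\mathrm{d} t$). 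This matching is well founded: the formula for a tree with $p$ non-$A$ vertices involves only the $\varphi(\tau_k)$ of trees with strictly fewer non-$A$ vertices, so it determines $\varphi$ on all of $T$ by induction on that count.

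The one point that is not bookkeeping is the standing hypothesis that $X(h)$ admits a B-series representation at all, on which \eqref{kvarno_mini_ms18:eq:Bseriesimplicit} rests; I would dispatch it exactly as in the non-exponential setting of \cite{kvarno_mini_ms18:debrabant08bsa}. The Picard iterates $X^{[0]}(h)=x_0$ and $X^{[n+1]}(h)=\mathrm{e}^{hA}x_0+\sum_{m=0}^{M}\int_0^h\mathrm{e}^{(h-s)A}g_m(X^{[n]}(s))\star\mathrm{d} W_m(s)$ are, by \eqref{kvarno_mini_ms18:eq:gmBseries} and the power-series expansion of $\mathrm{e}^{hA}$, again B-series; their coefficient functions obey the truncated version of the recursion above and stabilize tree by tree — the value on a tree with $p$ non-$A$ vertices is reached after $p$ steps — hence converge to the $\varphi$ just derived, while the classical existence-and-uniqueness theory (under the stated Lipschitz and linear-growth assumptions on the $g_m$, and on the $g_m'g_m$ in the Stratonovich case) guarantees that the iteration converges to the actual solution. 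So the formal B-series with coefficients $\varphi$ genuinely represents $X(h)$. I expect the coefficient matching to be entirely routine; the two things worth spelling out are the unique $[\hat\tau]_A^{q}$ decomposition and the fact that existence of the B-series expansion is imported from \cite{kvarno_mini_ms18:debrabant08bsa}.
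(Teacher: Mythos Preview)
Your proposal is correct and follows essentially the same route as the paper: derive the implicit identity \eqref{kvarno_mini_ms18:eq:Bseriesimplicit}, invoke the unique decomposition $\tau=[\hat\tau]_A^{q}$ with $\hat\tau\in T\setminus T_A$, and read off $\varphi$ by matching coefficients of the elementary differentials. The only difference is that you spell out well-foundedness of the recursion and justify the standing assumption that $X(h)$ admits a B-series via Picard iteration (as in \cite{kvarno_mini_ms18:debrabant08bsa}), whereas the paper simply posits the B-series ansatz and proceeds formally.
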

	\begin{example}\label{kvarno_mini_ms18:ex:treexample}
		Let $\tau = \tikz[grow=up, level distance=2mm,	sibling distance=2mm, every node/.style={shading=ball, ball color=black, circle, inner sep=0.5mm}]{\node {} child{node[ball color=white] {} child{node{}} child{node[ball color=red] {}}} child{node[ball color=white] {}}}$, where the
		colors red, black and white correspond to the matrix $A$, deterministic function
		$g_0$ and stochastic function $g_1$ respectively. Then $\alpha(\tau) = 1$,
		$F(\tau)(x_0) = g_0''(g_1,g_1''(Ax_0, g_0))(x_0)$ and $\varphi(\tau)(h) =
		\int_{0}^{h}\left(W_1(s) \int_{0}^{s}s_1^2\star\mathrm{d} W_1(s_1)\right) \mathrm{d} s$. Note also that
		e.g.\ the tree $\tau = \tikz[grow=up, level distance=2mm,	sibling distance=2mm, every node/.style={shading=ball, ball color=black, circle, inner sep=0.5mm}]{\node {} child{node[ball color=red] {} child{node{}} child{node[ball color=red] {}}}
		child{node[ball color=white] {}}} \notin T$ since it is impossible for node $\tikz[grow=up, level distance=2mm,	sibling distance=2mm, every node/.style={shading=ball, ball color=black, circle, inner sep=0.5mm}]{\node[ball color=red] {}}$ to have more than one branch.
	\end{example}
	
	Now we derive the B-series representation for one step of the exponential stochastic
	integrator \eqref{kvarno_mini_ms18:eq:Btab}. Assume both the stage values $H_i$ and the
	approximation $Y_{n+1}$ to the exact solution  can be written as B-series:
	\begin{equation}\label{kvarno_mini_ms18:BSeriesansatznum}
	H_i = B(\Phi_i,Y_n;h), \qquad i = 1,\ldots, \nu \qquad \text{ and } \qquad Y_{n+1} = B(\Phi, Y_n; h).
	\end{equation}
	In addition we assume that the coefficients $Z_{ij}^{(m)}(A)$ and $z_i^{(m)}(A)$ can be expressed as power series of the form
	\begin{equation*}
	Z_{ij}^{({m})}(A) = \sum_{{q}=0}^{\infty}Z_{ij}^{({m},{q})}A^{q} \qquad \text{ and }\qquad z_{i}^{({m})}(A) = \sum_{{q}=0}^{\infty}z_{i}^{({m},{q})}A^{q},
	\end{equation*}
	for $i,j = 1, \ldots, \nu,$ and ${m} = 0,\ldots, {M}$.
	Substituting these formulas into \eqref{kvarno_mini_ms18:eq:Btab} and using \eqref{kvarno_mini_ms18:eq:gmBseries} we get
	\begin{align*}
	&H_i  = \sum_{{q}=0}^{\infty}\frac{(c_ih)^{{q}}}{{q}!}A^{{q}}Y_n +
	\sum_{{m}=0}^{{M}}\sum_{j=1}^{\nu}Z_{ij}^{({m})}(A)\sum_{\tau\in T_{m}}\alpha(\tau)\cdot\Phi_j'(\tau)(h)\cdot F(\tau)(Y_n) \\
	& = \sum_{{q}=0}^{\infty}\frac{(c_ih)^{{q}}}{{q}!}A^{{q}}Y_n
	+ \sum_{{m}=0}^{{M}}\sum_{j=1}^{\nu}\sum_{\tau\in T_{m}}\alpha(\tau)\sum_{{q}=0}^{\infty}Z_{ij}^{({m},{q})}\Phi_j'(\tau)(h)\cdot A^{q}  F(\tau)(Y_n),
	\end{align*}
	and similarly
	\begin{align*}
	Y_{n+1} & = \sum_{{q}=0}^{\infty}\frac{h^{{q}}}{{q}!}A^{{q}}Y_n
	+ \sum_{{m}=0}^{{M}}\sum_{i=1}^{\nu}\sum_{\tau\in T_{m}}\alpha(\tau)\sum_{{q}=0}^{\infty}z_{i}^{({m},{q})}\Phi_i'(\tau)(h)\cdot A^{q}  F(\tau)(Y_n).
	\end{align*}
	Now using \eqref{kvarno_mini_ms18:BSeriesansatznum} and the linear independence of the elementary differentials yields the following theorem.
	\begin{theorem}\label{kvarno_mini_ms18:thm:SETDRK}
		The stage values $H_i$ and the numerical solution $Y_{n+1}$ of \eqref{kvarno_mini_ms18:eq:Btab} can be written as B-series $H_i = B(\Phi_i,Y_n;h)$, $i = 1,\ldots, \nu$, and $Y_{n+1} = B(\Phi, Y_n; h)$ with the following recurrence relations for the functions $\Phi_i(\tau)(h)$ and $\Phi(\tau)(h)$,
		\begin{gather*}
		\Phi_i(\emptyset) = \Phi(\emptyset)\equiv1,\quad \Phi_i([\emptyset]_A^{q})(h) =\frac{(c_ih)^{{q}}}{{q}!},\quad \Phi([\emptyset]_A^{q})(h) = \frac{h^{q}}{{q}!},\\
		\Phi_i([[\tau_1,\ldots,\tau_\kappa]_{m}]_A^{q})(h) = \sum_{j=1}^{\nu}Z_{ij}^{({m},
		{q})}\prod_{{k}=1}^{\kappa}\Phi_j(\tau_{k})(h),\\
		\Phi([[\tau_1,\ldots,\tau_\kappa]_{m}]_A^{q})(h) = \sum_{i=1}^{\nu}z_{i}^{({m}, {q})}\prod_{{k}=1}^{\kappa}\Phi_i(\tau_{k})(h),
		\end{gather*}
		for $\tau_1,\dots,\tau_\kappa\in T$, $\kappa=0,1,\ldots$, ${q}=0,1,\ldots$  and $m=0,\dots,M$, where $\tau_i\neq\emptyset$ for $i=1,\dots,\kappa$ if $\kappa>1$.
	\end{theorem}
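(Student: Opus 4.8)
The plan is to mirror, for the numerical scheme, the argument just carried out for the exact solution (Theorem~\ref{kvarno_mini_ms18:thm:exact}): substitute the B-series ansatz \eqref{kvarno_mini_ms18:BSeriesansatznum} together with the power-series expansions of $\mathrm{e}^{c_ihA}$, $\mathrm{e}^{hA}$ and of $Z_{ij}^{(m)}(A)$, $z_i^{(m)}(A)$ into \eqref{kvarno_mini_ms18:eq:Btab}, use \eqref{kvarno_mini_ms18:eq:gmBseries} to rewrite each $g_m(H_j)$ as a B-series in $Y_n$, and then compare coefficients of the elementary differentials.

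After these substitutions one arrives at the two displays immediately preceding the theorem, in which $H_i$ and $Y_{n+1}$ appear as sums of terms $A^qF(\tau)(Y_n)$ with $\tau\in T_m$, plus the ``linear'' contributions $\tfrac{(c_ih)^q}{q!}A^qY_n$, resp.\ $\tfrac{h^q}{q!}A^qY_n$, coming from the exponentials. The key step is then the same reindexing as in \eqref{kvarno_mini_ms18:eq:Bseriesimplicit}: every tree in $T$ is uniquely of the form $[\hat\tau]_A^q$ with $\hat\tau\in T\setminus T_A$, and $A^qF(\hat\tau)=F([\hat\tau]_A^q)$, $\alpha([\hat\tau]_A^q)=\alpha(\hat\tau)$; the linear terms supply exactly the trees $[\emptyset]_A^q$ (since $A^qY_n=F([\emptyset]_A^q)(Y_n)$, $\alpha=1$), while the $g_m$-terms supply the trees $[[\tau_1,\dots,\tau_\kappa]_m]_A^q$, with $\Phi_j'([\tau_1,\dots,\tau_\kappa]_m)(h)=\prod_k\Phi_j(\tau_k)(h)$. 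This exhibits $H_i$ and $Y_{n+1}$ as B-series in $Y_n$, and by the linear independence of the elementary differentials $F(\tau)$ — which holds because the identities must be valid for all admissible coefficients $g_0,\dots,g_M$, cf.\ \cite{kvarno_mini_ms18:debrabant08bsa} — the coefficient functions $\Phi_i$ and $\Phi$ must agree termwise with the right-hand sides, which is precisely the asserted recursion; the base values $\Phi_i(\emptyset)=\Phi(\emptyset)\equiv1$ and $\Phi_i([\emptyset]_A^q)(h)=\tfrac{(c_ih)^q}{q!}$, $\Phi([\emptyset]_A^q)(h)=\tfrac{h^q}{q!}$ are read off from the exponential terms.

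Two points deserve care. First, the stage equations are in general \emph{implicit}, so one must note that the recursion is nonetheless well posed: the formula for $\Phi_i([[\tau_1,\dots,\tau_\kappa]_m]_A^q)$ involves $\Phi_j$ only at the proper subtrees $\tau_k$, so the coefficients — and hence the B-series $H_i = B(\Phi_i,Y_n;h)$ — are uniquely determined by induction on the number of vertices. Second, to call these series B-series in the sense of the definition one should check $\Phi_i(\emptyset)\equiv1$ (clear) and $\Phi_i(\tau)(0)=0$ for $\tau\ne\emptyset$; the latter follows from $\tfrac{(c_ih)^q}{q!}\to0$ and the product structure of the recursion under the mild, consistency-type assumption that each $Z_{ij}^{(m,q)}$ and $z_i^{(m,q)}$ vanishes as $h\to0$. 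I expect the bookkeeping in the $[\hat\tau]_A^q$ reindexing — keeping the color-$A$ chains, the $g_m$-roots and the leaves straight — to be the only genuinely delicate part; the rest is a transcription of the computation already performed for the exact solution.
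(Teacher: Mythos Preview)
Your proposal is correct and follows essentially the same route as the paper: substitute the B-series ansatz and the power-series expansions into \eqref{kvarno_mini_ms18:eq:Btab}, apply \eqref{kvarno_mini_ms18:eq:gmBseries}, reindex via $\tau=[\hat\tau]_A^{q}$, and read off the recursions using the linear independence of the elementary differentials. Your two additional remarks --- that the recursion is well posed despite the implicitness, and that the $\Phi_i(\tau)(0)=0$ condition requires a consistency assumption on the $Z_{ij}^{(m,q)}$, $z_i^{(m,q)}$ --- go slightly beyond what the paper spells out but are welcome clarifications rather than a different argument.
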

	To discuss the order of the method, we need the following definition.
	\begin{definition}[order]
		The order $\rho(\tau)$ of a tree $\tau\in T$ is defined by
		\[
		\rho(\emptyset) = 0, \qquad \rho([\tau_1]_A) = \rho(\tau_1) +1
		\]
		and
		\[\rho([\tau_1,\ldots,\tau_\kappa]_{m} ) =\sum_{{k}=1}^{\kappa}\rho(\tau_{k}) + \begin{cases}
		1 & \text{ if } {m}=0, \\ \frac{1}{2} & \text{otherwise,}
		\end{cases}
		\]
		for ${m} = 0, 1, \ldots, M$.
	\end{definition}
With Theorems \ref{kvarno_mini_ms18:thm:exact} and \ref{kvarno_mini_ms18:thm:SETDRK} in place, we can now analyze the order of a given method:
	\begin{theorem} \label{kvarno_mini_ms18:thm:milstein}
		The method has mean square global order $p$ if
		\begin{subequations} \label{kvarno_mini_ms18:eq:msord}
			\begin{align}
			\Phi(\tau)(h) &= \varphi(\tau)(h) +
			\mathcal{O}(h^{p+\frac{1}{2}}) \text{ for all }\tau  \in T \text{ with }
			\rho(\tau)\leq p, \label{kvarno_mini_ms18:eq:msorda} \\
			E\Phi(\tau)(h) &= E \varphi(\tau)(h) + \mathcal{O}(h^{p+1}) \text{ for all }\tau
			\in T \text{ with }
			\rho(\tau)\leq p+\frac{1}{2}. \label{kvarno_mini_ms18:eq:msordb}
			\end{align}
		\end{subequations}
	\end{theorem}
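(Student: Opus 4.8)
The plan is to reduce the global claim to a one-step error estimate and then invoke Milstein's fundamental mean square convergence theorem: if, carrying out one step of \eqref{kvarno_mini_ms18:eq:Btab} from a deterministic value $x$ at time $t$ and writing $\bar Y$ for the result and $X(t+h)$ for the exact solution of \eqref{kvarno_mini_ms18:eq:SDE} started at $(t,x)$, one has, with a constant $C$ independent of $x$ and $t$,
\[
\bigl|E\bigl(\bar Y-X(t+h)\bigr)\bigr|\le Ch^{p+1}
\qquad\text{and}\qquad
\bigl(E\bigl|\bar Y-X(t+h)\bigr|^{2}\bigr)^{1/2}\le Ch^{p+\frac12},
\]
then the scheme has mean square global order $p$. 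So it suffices to derive these two local bounds from \eqref{kvarno_mini_ms18:eq:msorda}--\eqref{kvarno_mini_ms18:eq:msordb} (where \eqref{kvarno_mini_ms18:eq:msorda} is to be read in the mean square sense).

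By Theorems \ref{kvarno_mini_ms18:thm:exact} and \ref{kvarno_mini_ms18:thm:SETDRK}, both $\bar Y$ and $X(t+h)$ are B-series based at $x$, so
\[
\bar Y-X(t+h)=\sum_{\tau\in T}\alpha(\tau)\,\bigl(\Phi(\tau)(h)-\varphi(\tau)(h)\bigr)\,F(\tau)(x),
\]
and, since $x$ is deterministic, $F(\tau)(x)$ is deterministic. I would then estimate the two quantities separately by splitting the tree sum according to the order $\rho(\tau)$ (with $2p\in\mathbb{N}$, as is implicit in the statement). For the root mean square: trees with $\rho(\tau)\le p$ contribute $\mathcal{O}(h^{p+\frac12})$ by \eqref{kvarno_mini_ms18:eq:msorda}, and trees with $\rho(\tau)\ge p+\frac12$ contribute $\mathcal{O}(h^{\rho(\tau)})\subseteq\mathcal{O}(h^{p+\frac12})$ because $\varphi(\tau)(h)$ and $\Phi(\tau)(h)$ are individually of mean square size $\mathcal{O}(h^{\rho(\tau)})$. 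For the mean: trees with $\rho(\tau)\le p+\frac12$ contribute $\mathcal{O}(h^{p+1})$ by \eqref{kvarno_mini_ms18:eq:msordb}, and trees with $\rho(\tau)\ge p+1$ contribute $\mathcal{O}(h^{\rho(\tau)})\subseteq\mathcal{O}(h^{p+1})$ via the same individual estimates together with $|E\xi|\le(E|\xi|^{2})^{1/2}$. Adding the contributions yields exactly the two required one-step bounds.

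Two standard ingredients are needed to make this rigorous. First, the a priori estimates $(E|\varphi(\tau)(h)|^{2})^{1/2}=\mathcal{O}(h^{\rho(\tau)})$ and $(E|\Phi(\tau)(h)|^{2})^{1/2}=\mathcal{O}(h^{\rho(\tau)})$; these follow from the classical bounds for iterated It\^{o}/Stratonovich integrals together with the recursions of Theorems \ref{kvarno_mini_ms18:thm:exact}--\ref{kvarno_mini_ms18:thm:SETDRK} and the boundedness of the method coefficients $Z_{ij}^{(m,q)}$, $z_i^{(m,q)}$ (cf.\ \cite{kvarno_mini_ms18:debrabant08bsa}). Second, because B-series are formal series, the tails over $\rho(\tau)\ge p+\frac12$ and $\rho(\tau)\ge p+1$ cannot be summed termwise but must be handled by truncation: one expands $X(t+h)$ and $\bar Y$ into the finitely many trees of order $\le p+\frac12$ plus remainders, and estimates the remainders by the usual stochastic (It\^{o}--)Taylor remainder bounds. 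This is the step where the assumed smoothness and the boundedness hypotheses on the $g_m$ enter, ensuring that the elementary differentials $F(\tau)$ and the remainder terms are bounded and that the constant $C$ is uniform in $x$ and $t$. I expect this truncation-and-remainder bookkeeping, rather than the tree classification itself, to be the main technical obstacle; the algebraic heart of the argument is just the split by $\rho(\tau)$, which is exactly why \eqref{kvarno_mini_ms18:eq:msorda} carries the threshold $\rho(\tau)\le p$ and \eqref{kvarno_mini_ms18:eq:msordb} the threshold $\rho(\tau)\le p+\frac12$.
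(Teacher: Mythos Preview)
The paper does not actually prove this theorem: immediately after the statement it only remarks that the $\mathcal{O}(\cdot)$ in \eqref{kvarno_mini_ms18:eq:msorda} is meant in $L^{2}$ and that ``the result \eqref{kvarno_mini_ms18:eq:msord} was first proved in \cite{kvarno_mini_ms18:burrage00oco}.'' So there is no in-paper proof to compare against.

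That said, your sketch is the standard route and matches what the cited reference does: write the one-step error as the B-series difference from Theorems~\ref{kvarno_mini_ms18:thm:exact} and~\ref{kvarno_mini_ms18:thm:SETDRK}, split the trees at $\rho(\tau)\le p$ (respectively $\rho(\tau)\le p+\tfrac12$), use the a priori bounds $(E|\varphi(\tau)(h)|^{2})^{1/2},(E|\Phi(\tau)(h)|^{2})^{1/2}=\mathcal{O}(h^{\rho(\tau)})$ for the higher-order trees, and feed the resulting local estimates into Milstein's fundamental mean square convergence theorem. Your caveat about replacing the formal tail by a genuine truncated Taylor expansion with controlled remainder is exactly the technical point that needs care, and it is also where the smoothness and growth assumptions on the $g_m$ are used; this is handled in \cite{kvarno_mini_ms18:burrage00oco} and \cite{kvarno_mini_ms18:debrabant08bsa} and is not reproduced here.
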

	Here, the $\mathcal{O}(\cdot)$-notation refers to $h\to0$ and, especially in \eqref{kvarno_mini_ms18:eq:msorda}, to the $L^2$-norm.
	The result \eqref{kvarno_mini_ms18:eq:msord} was first proved in \cite{kvarno_mini_ms18:burrage00oco}.\\
We conclude this article with an example.
	\begin{example}
		We will apply Theorem \ref{kvarno_mini_ms18:thm:milstein} to the method given in Example \ref{kvarno_mini_ms18:ex:SETDRK}. Using the expansion (for the manipulation of stochastic integrals, see e.\,g.\ \cite{kvarno_mini_ms18:debrabant10ste})
		\begin{align*}
		 \int_{0}^{h}\mathrm{e}^{(h-s)A}\star\mathrm{d} W_1(s) & = \int_{0}^{h}1\star\mathrm{d} W_1(s)A^0 +
		 \int_{0}^{h}(h-s)\star\mathrm{d} W_1(s)A^1 \\
		 & +\int_{0}^{h}\frac{(h-s)^2}{2}\star\mathrm{d} W_1(s)A^2+\dots \\
		 &= I_{(1)}^{*}A^0 + I_{(10)}^{*} A^1 + I_{(100)}^{*}A^2 +\dots\\
		\end{align*}
		 where $I_{(m_1\dots m_n)}^*=\int_{0}^{h}\int_{0}^{s_1}\dots \int_{0}^{s_{n-1}}\star\mathrm{d} W_{m_1}(s_n)\dots\star\mathrm{d} W_{m_n}(s_1)$, and the similar expansion $\int_{0}^{h}\mathrm{e}^{(h-s)A}W_1(s)\star\mathrm{d} W_1(s)= I_{(11)}^{*}A^0 + I_{(110)}^{*}A^1 + \dots$
		 we obtain
		\begin{align*}
		z_1^{(0)}&=\int_0^h\mathrm{e}^{(h-s)A} \mathrm{d} s=
		hA^0 + \frac{h^2}{2}A^1 + \frac{h^3}{6}A^2 + \dots,
		\\
		z_1^{(1)}&= \int_{0}^{h}\mathrm{e}^{(h-s)A}(1 -\frac{W_1(s)}{\sqrt{h}})\star\mathrm{d} W_1(s)\\
			& = (I_1^*-\frac{I_{(11)}^{*}}{\sqrt{h}})A^0 + (I_{(10)}^*-\frac{I_{(110)}^{*}}{\sqrt{h}})A^1 + \dots,\\
		z_2^{(1)}&=\int_{0}^{h}\mathrm{e}^{(h-s)A}\frac{W_1(s)}{\sqrt{h}}\star\mathrm{d} W_1(s) = \frac{I_{(11)}^{*}}{\sqrt{h}}A^0 + \frac{I_{(110)}^{*}}{\sqrt{h}}A^1 + \dots.
		\end{align*}
		We also have (with colors as in Example \ref{kvarno_mini_ms18:ex:treexample}) $z_2^{(0)}=0$, $\Phi_1(\tikz[grow=up, level distance=2mm,	sibling distance=2mm, every node/.style={shading=ball, ball color=black, circle, inner sep=0.5mm}]{\node {}})=\Phi_2(\tikz[grow=up, level distance=2mm,	sibling distance=2mm, every node/.style={shading=ball, ball color=black, circle, inner sep=0.5mm}]{\node {}})=\Phi_1(\tikz[grow=up, level distance=2mm,	sibling distance=2mm, every node/.style={shading=ball, ball color=black, circle, inner sep=0.5mm}]{\node[ball color=red] {}})=\Phi_2(\tikz[grow=up, level distance=2mm,	sibling distance=2mm, every node/.style={shading=ball, ball color=black, circle, inner sep=0.5mm}]{\node[ball color=red] {}})=\Phi_1(\tikz[grow=up, level distance=2mm,	sibling distance=2mm, every node/.style={shading=ball, ball color=black, circle, inner sep=0.5mm}]{\node[ball color=white] {}})=\Phi_1(\tikz[grow=up, level distance=2mm,	sibling distance=2mm, every node/.style={shading=ball, ball color=black, circle, inner sep=0.5mm}]{\node[ball color=white] {} child {node[ball color=white] {}}})=\Phi_2(\tikz[grow=up, level distance=2mm,	sibling distance=2mm, every node/.style={shading=ball, ball color=black, circle, inner sep=0.5mm}]{\node[ball color=white] {} child {node[ball color=white] {}}})=0$ and $\Phi_2(\tikz[grow=up, level distance=2mm,	sibling distance=2mm, every node/.style={shading=ball, ball color=black, circle, inner sep=0.5mm}]{\node[ball color=white] {}})=\sqrt{h}$, resulting in the weight functions given in 
		the following table:\\[1mm]{\small
			\begin{tabular}{c|c|c|c}\hline
				$\tau$         & $\rho(\tau)$ & $\varphi(\tau)(h)$ & $\Phi(\tau)(h)$\\ \hline
				\tikz[grow=up, level distance=2mm,	sibling distance=2mm, every node/.style={shading=ball, ball color=black, circle, inner sep=0.5mm}]{\node[ball color=white] {}} & 0.5 & $I_{(1)}^*$ &  $z_1^{(1,0)} + z_2^{(1,0)} = I_{(1)}^*$ \\ \hline
				\tikz[grow=up, level distance=2mm,	sibling distance=2mm, every node/.style={shading=ball, ball color=black, circle, inner sep=0.5mm}]{\node {}} &     1        & $h$             & $z_1^{(0,0)} + z_2^{(0,0)} = h$\\
				\tikz[grow=up, level distance=2mm,	sibling distance=2mm, every node/.style={shading=ball, ball color=black, circle, inner sep=0.5mm}]{\node[ball color=red] {}} &            & $h$             & $h$ \\
				\tikz[grow=up, level distance=2mm,	sibling distance=2mm, every node/.style={shading=ball, ball color=black, circle, inner sep=0.5mm}]{\node[ball color=white] {} child{node[ball color=white] {}}} &    & $I_{(11)}^*$ & $z_1^{(1,0)}\Phi_1(\tikz[grow=up, level distance=2mm,	sibling distance=2mm, every node/.style={shading=ball, ball color=black, circle, inner sep=0.5mm}]{\node[ball color=white] {}}) + z_2^{(1,0)}\Phi_2(\tikz[grow=up, level distance=2mm,	sibling distance=2mm, every node/.style={shading=ball, ball color=black, circle, inner sep=0.5mm}]{\node[ball color=white] {}}) = I_{(11)}^*$  \\
				\hline
				\tikz[grow=up, level distance=2mm,	sibling distance=2mm, every node/.style={shading=ball, ball color=black, circle, inner sep=0.5mm}]{\node {} child{node[ball color=white] {}}} & 1.5   & $I_{(10)}^*$ & $z_1^{(0,0)}\Phi_1(\tikz[grow=up, level distance=2mm,	sibling distance=2mm, every node/.style={shading=ball, ball color=black, circle, inner sep=0.5mm}]{\node[ball color=white] {}}) + z_2^{(0,0)}\Phi_2(\tikz[grow=up, level distance=2mm,	sibling distance=2mm, every node/.style={shading=ball, ball color=black, circle, inner sep=0.5mm}]{\node[ball color=white] {}}) = 0$ \\
				\tikz[grow=up, level distance=2mm,	sibling distance=2mm, every node/.style={shading=ball, ball color=black, circle, inner sep=0.5mm}]{\node[ball color=red] {} child{node[ball color=white] {}}} &       & $hI_{(1)}^*-I_{(01)}^*$ & $z_1^{(1,1)}\Phi_1(\emptyset) + z_2^{(1,1)}\Phi_2(\emptyset) = I_{(10)}^*$\\
				\tikz[grow=up, level distance=2mm,	sibling distance=2mm, every node/.style={shading=ball, ball color=black, circle, inner sep=0.5mm}]{\node[ball color=white] {} child{node{}}} &       & $I_{(01)}^*$ & $z_1^{(1,0)}\Phi_1(\tikz[grow=up, level distance=2mm,	sibling distance=2mm, every node/.style={shading=ball, ball color=black, circle, inner sep=0.5mm}]{\node {}}) + z_2^{(1,0)}\Phi_2(\tikz[grow=up, level distance=2mm,	sibling distance=2mm, every node/.style={shading=ball, ball color=black, circle, inner sep=0.5mm}]{\node {}}) = 0$\\
				\tikz[grow=up, level distance=2mm,	sibling distance=2mm, every node/.style={shading=ball, ball color=black, circle, inner sep=0.5mm}]{\node[ball color=white] {} child{node[ball color=red] {}}} &       & $I_{(01)}^*$ & $z_1^{(1,0)}\Phi_1(\tikz[grow=up, level distance=2mm,	sibling distance=2mm, every node/.style={shading=ball, ball color=black, circle, inner sep=0.5mm}]{\node[ball color=red] {}}) + z_2^{(1,0)}\Phi_2(\tikz[grow=up, level distance=2mm,	sibling distance=2mm, every node/.style={shading=ball, ball color=black, circle, inner sep=0.5mm}]{\node[ball color=red] {}}) = 0$\\
				\tikz[grow=up, level distance=2mm,	sibling distance=2mm, every node/.style={shading=ball, ball color=black, circle, inner sep=0.5mm}]{\node[ball color=white] {} child{node[ball color=white] {}} child{node[ball color=white] {}}} &       & $\int_{0}^{h}W_1^2(s)\star\mathrm{d} W_1(s)$ & $z_1^{(1,0)}\Phi_1^2(\tikz[grow=up, level distance=2mm,	sibling distance=2mm, every node/.style={shading=ball, ball color=black, circle, inner sep=0.5mm}]{\node[ball color=white] {} }) + z_2^{(1,0)}\Phi_2^2(\tikz[grow=up, level distance=2mm,	sibling distance=2mm, every node/.style={shading=ball, ball color=black, circle, inner sep=0.5mm}]{\node[ball color=white] {} }) = \sqrt{h}I_{(11)}^*$\\
				\tikz[grow=up, level distance=2mm,	sibling distance=2mm, every node/.style={shading=ball, ball color=black, circle, inner sep=0.5mm}]{\node[ball color=white] {} child{node[ball color=white] {} child{node[ball color=white] {}}}} &       & $I_{(111)}^*$ & $z_1^{(1,0)}\Phi_1(\tikz[grow=up, level distance=2mm,	sibling distance=2mm, every node/.style={shading=ball, ball color=black, circle, inner sep=0.5mm}]{\node[ball color=white] {} child {node[ball color=white] {}}}) + z_2^{(1,0)}\Phi_2(\tikz[grow=up, level distance=2mm,	sibling distance=2mm, every node/.style={shading=ball, ball color=black, circle, inner sep=0.5mm}]{\node[ball color=white] {} child{node[ball color=white] {}}}) = 0$ \\ \hline
			\end{tabular}}\\[1mm]
			While the weight functions for the exact solution and numerical approximation of the order 1.5 trees do not coincide, their expectation values coincide for It\^{o} integral but not for Stratonovich (when $\tau = \tikz[grow=up, level distance=2mm,	sibling distance=2mm, every node/.style={shading=ball, ball color=black, circle, inner sep=0.5mm}]{\node[ball color=white] {} child{node[ball color=white] {}} child{node[ball color=white] {}}}$). Thus, the method given in Example \ref{kvarno_mini_ms18:ex:SETDRK} has mean square order 1 for the It\^{o} case but 0.5 in the Stratonovich case.
		\end{example}	
		\bibliographystyle{vmams}	
		   \ifx\undefined\bysame
		   \newcommand{\bysame}{\leavevmode\hbox to3em{\hrulefill}\,}
		   \fi

	\end{document}